\documentclass[a4paper,11pt]{amsart}

\usepackage[T1]{fontenc}

\usepackage[top=1.4in, bottom=0.8in, left=0.8in, right=0.8in]{geometry}

\newtheorem{theo}{Theorem}[section]
\newtheorem{lem}{Lemma}[section]

\theoremstyle{definition}
\newtheorem*{defi*}{Definition}

\theoremstyle{plain}
\newtheorem{thmint}{Theorem}

\newcommand\defeq{\,\mathrel{\stackrel{\makebox[0pt]{\mbox{\normalfont\tiny def}}}{=}}\;}

\def\Cinf{\mathcal{C}^\infty}
\DeclareMathOperator\Lie{Lie}
\DeclareMathOperator{\im}{i}
\DeclareMathOperator{\imm}{Im}
\DeclareMathOperator\id{id}

\let\txtaccH\H  \let\txtaccS\S  \let\txtaccL\L  \let\txtacct\t

\renewcommand{\H}{\ifmmode\mathcal H\else\expandafter\txtaccH\fi}
\renewcommand{\L}{\ifmmode\mathcal L\else\expandafter\txtaccL\fi}
\renewcommand{\S}{\ifmmode\mathbb S\else\expandafter\txtaccS\fi}
\renewcommand{\t}{\ifmmode\mathfrak t\else\expandafter\txtacct\fi}

\def \A {\mathcal A}
\def \U {\mathcal U}
\def \X {\mathfrak X}
\def \F{\mathcal{F}}
\def \R {\mathbb R}
\def \C {\mathbb C}
\def \V {\mathcal V}
\def \W {\mathcal W}

\def \g {\mathfrak g}
\def \h {\mathfrak h}

\newcommand{\p}{\partial}
\renewcommand{\bar}{\overline}

\title{Pluriclosed deformations of Bismut flat metrics}

\author{Giuseppe Barbaro}
\address{Giuseppe Barbaro\newline
		\textsc{\indent Instituto de Ciencias Matem\'aticas (ICMAT), CSIC-UAM - UC3M - UCM\newline 
			\indent Nicol\'as Cabrera 13--15, 28049 Madrid, Spain}}
\email{giuseppe.barbaro@icmat.es}

\keywords{}
\thanks{This work was supported by the European Union’s Horizon Europe research and innovation programme under the Marie Skłodowska-Curie Actions (Grant agreement No. 101273232 - SURF FLOW; HORIZON-MSCA-2025-PF).
The author has also been supported by a DFF Sapere Aude grant ``Conformal geometry: metrics and cohomology". The author is a member of GNSAGA of INdAM.}

\begin{document}
	
\begin{abstract}
    We deform the complex structure of compact simply-connected Bismut flat manifolds, obtaining new complex manifolds that admit pluriclosed metrics but do not admit Bismut flat metrics.
    This produces genuinely new examples of pluriclosed manifolds while enriching our understanding of the deformation theory of pluriclosed metrics.
    Moreover, our main result is relevant for the search of pluriclosed solitons and non-trivial Bismut Hermitian Einstein metrics in all dimensions.
\end{abstract}
	
\maketitle
	
\section{Introduction}\label{sec: intro}
The notion of a {\em pluriclosed metric} was introduced by Bismut in the context of non-K\"ahler index theory \cite{MR1006380}, and is closely related to the {\em Bismut connection} -- the unique Hermitian connection whose torsion tensor is totally skew-symmetric. On a Hermitian manifold $(M,J,g)$, the torsion $3$-form of this connection is given by $Jd\omega$, where $\omega= g(J\cdot,\cdot)$ denotes the fundamental form of the metric. A Hermitian metric is called {\em pluriclosed}
if its fundamental form satisfies $dJd\omega=0$, i.e. if the torsion $3$-form of the Bismut connection is closed. Beyond index theory, the interest in pluriclosed metrics stems from several sources. In mathematical physics, they arise naturally in supersymmetric sigma models and related theories \cite{MR851702, MR776369, MR800347, MR872720}. In complex geometry, they play a distinguished role because every conformal class of Hermitian metrics on a compact complex surface contains a pluriclosed representative \cite{MR470920}; this makes them a central tool in the geometrization of compact complex surfaces via the {\em pluriclosed flow} \cite{MR2673720, MR3110582, MR4181011}. Finally, they are one of the fundamental ingredients of {\em generalized K\"ahler geometry} \cite{MR776369, MR3232003}.

Beyond complex surfaces, we are only beginning to understand which non-K\"ahler manifolds can support pluriclosed metrics, with relatively few examples known and construction methods beginning to emerge.
A fundamental family is provided by compact Lie groups endowed with a bi-invariant metric and a compatible {\em Samelson} complex structure \cite{MR59287, MR1836272, zhao2020strominger}. These Hermitian structures are precisely the {\em Bismut flat} ones, namely those whose Bismut connection has vanishing curvature \cite{MR4127891}. Further existence results include toric bundles over Hermitian manifolds \cite{MR2406264, MR2736170, MR4554058}, certain instanton moduli spaces \cite{MR2314216}, and products of Sasakian manifolds \cite{MR4733370}. The existence problem has also been studied systematically on nilmanifolds and solvmanifolds, where it has led to a number of existence and classification results \cite{MR4480232, MR2059435, MR2926995, MR4462315, MR4891832, MR4329266, MR2797819, MR4838566}. Finally, under the additional assumption that the torsion of the Bismut connection is parallel, pluriclosed manifolds admit a complete classification \cite{MR5124088}.
Remarkably, most of these examples arise on manifolds carrying a complex toric fibration over a Hermitian manifold. 
This common geometric feature suggests that toric foliations provide a natural framework for the existence of pluriclosed metrics. A further class of examples, coming from the recent construction of {\em Bismut Hermitian--Einstein manifolds} \cite{ALL,MR5008125}, is again realized in this way, as a toric foliation over a K\"ahler orbifold; since these examples are relevant to the present work, we shall return to them in greater detail below.

The homogeneous case has been investigated systematically for Wang`s {\em C-spaces}, which are compact complex manifolds with finite fundamental group admitting a transitive action by a compact Lie group of biholomorphisms. It was shown in \cite{MR4032184} that such a manifold admits a pluriclosed metric if and only if it is the product of a compact Lie group and a K\"ahler homogeneous C-space.
Then, it was proved in \cite{MR4592898} that on an even-dimensional compact simply connected Lie group endowed with a Samelson complex structure, the existence of a pluriclosed metric forces the complex structure to be compatible with a bi-invariant metric, so that the resulting Hermitian manifold is necessarily Bismut flat.
As a byproduct, since the compatibility with a bi-invariant metric cuts out a proper linear subspace of the space of Samelson structures, we have a simple illustration of the fact that the existence of pluriclosed metrics is not an open property under deformations of the complex structure.
The general deformation theory of pluriclosed metrics was developed by Cavalcanti using the tools of generalized geometry \cite{cavalcanti13, MR4157573}, who showed that the obstructions to deforming a pluriclosed metric live in the Dolbeault cohomology group $H^{2,1}_{\bar\p}$. In agreement with the discussion above, this group is never trivial for Bismut flat manifolds, see e.g. \cite{barbaro2024}.

In the present work we construct explicit pluriclosed deformations of Bismut flat structures. More precisely, starting from a Bismut flat Hermitian manifold, we deform its Samelson complex structure to a non-invariant complex structure that still carries pluriclosed metrics. Since Bismut flat structures are necessarily homogeneous, the deformed complex structures cannot admit Bismut flat metrics, and therefore yield genuinely new examples of pluriclosed manifolds.

\begin{thmint}\label{th: main}
    Let $(G,J,g)$ be a compact simply-connected Lie group equipped with a Bismut flat Hermitian structure. Then, there exists an open deformation space $0\in\U\subset\R^{\ell}$ and a smooth deformation $\{J_t\}_{t\in\U}$ of the complex structure $J(=J_0)$ such that
    \begin{itemize}
        \item[i.] the set of $t\in\U$ such that the complex structure $J_t$ is not biholomorphic to any left-invariant complex structure on $G$ is dense in $\U$;
        \item[ii.] there is a family of metrics $\{g_t\}_{t\in\U}$ varying smoothly on $t$ such that $(J_t,g_t)$ is a pluriclosed structure for $t\in\U$ and $g_0=g$.
    \end{itemize}
\end{thmint}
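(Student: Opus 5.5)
We plan to build the deformation from the toric structure of $(G,J)$. Fix a maximal torus $T\subset G$ with Lie algebra $\t$. A Samelson complex structure $J$ compatible with the bi-invariant metric $g$ preserves $\t\otimes\C$, acting on $\t$ as a $g$-orthogonal complex structure, and is determined by this restriction together with a choice of positive roots: $\g^{1,0}=\t^{1,0}\oplus\bigoplus_{\alpha>0}\g_\alpha$. The right action of $T$ is then holomorphic and isometric, and $G\to G/T$ is a holomorphic principal $T_\C$-type fibration over the flag manifold, whose fibres are complex tori. The idea is to deform $J$ only along the fibres, i.e. to twist the complex structure on $\t$ by a function on the base, while keeping the horizontal part fixed. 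The pluriclosed condition can then be checked using the bi-invariant structure on the horizontal part and an explicit computation on the vertical part.

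\textbf{Step 1 (construction of $J_t$).} Let $\theta\in\Omega^1(G,\t)$ be the left-invariant connection form of $G\to G/T$, and $\xi_1,\dots,\xi_{2r}$ the fundamental vector fields of the right $T$-action. Its curvature $F=d\theta$ is basic, and each of its components is of type $(1,1)$ on the flag manifold $G/T$ with respect to the induced invariant complex structure. Let $\ell$ parametrise a neighbourhood of $J|_\t$ among complex structures on $\t$ for which these curvature components remain of type $(1,1)$; this is an open set in the space of all complex structures on $\t$, a Siegel-type domain. We add a further finite-dimensional family of perturbations by real-valued functions pulled back from $G/T$ ($\mathrm{Ad}$-nonconstant, e.g. coefficients of eigenfunctions). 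Concretely, on the horizontal distribution $\ker\theta$ we keep $J$, and on the vertical part we set $J_t\xi=\theta$-dual of $A_t(\pi(x))\xi$, with $A_t$ a complex structure on $\t$. Integrability requires that $N_{J_t}$ vanish. The horizontal–horizontal component reduces to $F$ being of type $(1,1)$, which we arrange. The mixed components force $A_t$ to be holomorphic in a suitable sense, which pushes one toward constant $A_t$. So the honest family is instead: deform $J|_\t$ to constant $A_t$ and simultaneously deform the connection, $\theta_t=\theta+\pi^*\beta_t$, with $\beta_t$ a non-invariant $\t$-valued $1$-form on $G/T$ whose $d\beta_t$ is of type $(1,1)$. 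Since $H^{0,1}(G/T)=0$, such $\beta$ is gauge-trivial only up to a smooth diffeomorphism, not up to a $G$-invariant one. Combining this with the Samelson structures coming from deforming $A$ gives $\ell$ real parameters.

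\textbf{Step 2 (pluriclosed metrics).} Take $g_t=\pi^*g_{G/T}+\langle\theta_t,\theta_t\rangle_{A_t}$, where $g_{G/T}$ is the Killing-type invariant metric, so $g_0=g$. Then $\omega_t=\pi^*\omega_B+\tfrac12\sum \theta_t^i\wedge A_t\theta_t^i$, and a standard toric-bundle computation as in \cite{MR2406264, MR4554058} gives
\[
dd^c_{J_t}\omega_t=\pi^*\Big(dd^c\omega_B+\sum_i F_t^i\wedge F_t^i\Big),
\]
with an appropriate $A_t$-contraction. The pluriclosed condition at $t=0$ is exactly the Bismut flat identity. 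For small $t$ we plan to restore it by correcting $\omega_B$ to $\omega_B+dd^c u_t$, or more generally by a $(1,1)$-form, solving $dd^c\eta_t=-\delta_t$. This is solvable because $\delta_t$ is $d$-exact of type $(2,2)$ on the Kähler manifold $G/T$, by the $dd^c$-lemma. Smooth dependence on $t$ follows from elliptic theory, and positivity holds for small $t$.

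\textbf{Step 3 (non-invariance, the main obstacle).} We must show that for a dense set of $t$, $J_t$ is not biholomorphic to any left-invariant structure. First we would compare invariants that any left-invariant structure on compact $G$ must share, namely homogeneity: the automorphism group acts transitively. A non-invariant $\beta_t$ breaks the transitivity of holomorphic automorphisms, since $\mathrm{Aut}(G,J_t)$ should reduce to the finite-dimensional group preserving $\beta_t$ up to gauge. Making this rigorous is the delicate part. We would argue via the fibration: the holomorphic torus fibration should be canonical (for instance as the Albanese-type or algebraic reduction), so biholomorphisms descend to $G/T$, where they must preserve the class data $\beta_t$. A genericity argument, namely that the stabiliser of $\beta_t$ in $G$ is not transitive off a proper analytic subset of parameters, then yields density.
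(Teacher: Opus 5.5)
\textbf{The proposal has a genuine gap, and it is fatal for (i).} Your family is built from three pieces of data: the fixed right $T$-action, a constant complex structure $A_t$ on $\mathfrak t$, and the connection $\theta_t=\theta+\pi^*\beta_t$ with $d\beta_t$ of type $(1,1)$. Every such $J_t$ is biholomorphic to a left-invariant structure.

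\begin{itemize}
\item Write $\beta_t^{1,0}$ for the $\mathfrak t^{1,0}$-component of $\beta_t$, taken with respect to $A_t$. Since $d\beta_t$ has no $(0,2)$-part, $(\beta_t^{1,0})^{0,1}$ is $\overline\partial$-closed on $G/T$.
\item Because $H^{0,1}_{\overline\partial}(G/T)=0$, it equals $\overline\partial\phi^{1,0}$ for some smooth real $\phi\colon G/T\to\mathfrak t$.
\item The $T$-equivariant diffeomorphism $\Phi(x)=x\exp(-\phi(\pi(x)))$ satisfies $\Phi^*\theta_t=\theta+\pi^*(\beta_t-d\phi)$. The $\mathfrak t^{1,0}$-component of this form differs from that of $\theta$ by a $(1,0)$-form on the base.
\item Hence $\Phi^*J_t$ is the structure built from $\theta$ and $A_t$, which is a left-invariant Samelson structure.
\end{itemize}

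Equivalently, holomorphic principal bundles over $G/T$ with fibre the complex torus $(T,A_t)$ are determined by their topological type, since $H^1(G/T,\mathcal O)=0$. You observed that $\beta_t$ is gauge-trivial ``up to a smooth diffeomorphism''. But a diffeomorphism intertwining the two complex structures is a biholomorphism. So (i) fails for every $t$, and the non-transitivity claim of your Step~3 is false.

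The missing idea is that you must deform the foliation itself, not the connection on a fixed principal bundle. The paper tilts $\mathfrak t$ inside the $2r$-dimensional abelian algebra $\mathfrak t^L+\mathfrak t^R$ of holomorphic Killing fields, keeping $\mathcal H$ and $J|_{\mathcal H}$ fixed. For irrational slopes the $\mathfrak t_t$-orbits are not closed. This is incompatible with $\mathfrak t_t$ being the maximal torus of a Samelson construction for some group structure, and that is what gives (i). It also matches the paper's closing remark: since $G/T$ is rigid, non-trivial deformations of the transverse holomorphic structure come only from moving the generators of $\mathfrak t$.

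\textbf{Your Step~2 also fails as stated.} Since $[F_t]=[F]$ in $H^2(G/T)$, the class of $\delta_t$ equals $[\sum h^t_{ij}F^i\wedge F^j]$. By \eqref{eq: vanishing Q-form}, the only quadratic relations among these classes in $H^4(G/T)$ are spanned by the Killing forms of the simple factors. So $\delta_t$ is exact only if the $A_t$-Hermitian fibre metric $h^t$ is such a combination, that is, only if $A_t$ is orthogonal for some bi-invariant metric.

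For a generic $A_t$ in your open set this fails, and $J_t$ then carries no pluriclosed metric at all. This is consistent with the rigidity result of \cite{MR4592898} recalled in the introduction, since $J_t$ is biholomorphic to a Samelson structure.

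The paper handles this constraint in the deformed setting as follows:
\begin{itemize}
\item It reads off the relation $Q_t$ in $H^4_b(G,\mathcal F_t)$ of the tilted foliation.
\item It chooses $J_t|_{\mathfrak t_t}$ compatible with $Q_t$.
\item It obtains (ii) not by a correction on a base, which no longer exists as a smooth manifold for irrational slopes, but by proving $h^{1,1}_A(G,J_t)=1$ through the basic/Aeppli model and then applying Theorem~\ref{theo: Cavalcanti}.
\end{itemize}
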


The strategy of the proof rests on the toric foliation structure discussed above. More precisely, via the {\em Tits fibration}  $G\rightarrow G/T$, we regard $G$ as a toric fibration over a K\"ahler C-space $G/T$, where $T$ is the maximal torus detected by the Samelson complex structure $J$.
In other words, this provides $G$ with the structure of a transversely K\"ahler holomorphic foliation, and it is this foliated structure that we deform. 
We then run a Kodaira--Spencer type argument in which the role played by {\em Bott--Chern cohomology} in the K\"ahler setting is taken over by {\em Aeppli cohomology} in the pluriclosed one. 
A careful analysis of the cohomology of the deformed spaces is possible exploiting the existence of a cohomological model for transversely K\"ahler holomorphic toric foliations.
We thus show that the dimension of the relevant Aeppli cohomology group remains constant along the deformation, and this constancy implies the existence of a smooth projection of the Bismut flat metric to a pluriclosed metric on each deformed space.

Theorem \ref{th: main} shows that, although pluriclosed metrics are not stable under arbitrary deformations, their existence is nevertheless preserved along deformations that respect the peculiar geometry of Bismut flat spaces. It is worth stressing that, as recalled above, holomorphic toric foliations are by no means rare in pluriclosed geometry. 
We will indeed notice that the deformations in Theorem \ref{th: main} are precisely built on the deformations of the transversal holomorphic structure on $G$.
Therefore, it would be natural to further study on which conditions the existence of a pluriclosed metric on the total space of an initial toric holomorphic foliation persists under deformations of this structure.

In this article, we focus on the deformation of Bismut flat metrics because these are related to the search for ``non-trivial" {\em Bismut Hermitian--Einstein metrics}. Before explaining this link, let us recall a few crucial things on this special geometry.
First of all, a pluriclosed metric is called {Bismut Hermitian--Einstein} (BHE) if its Bismut Ricci form vanishes (cf. \cite{MR4629758, MR4284898}). 
Remarkably, the BHE equation can be interpreted as a Hermitian--Einstein equation for a twisted holomorphic bundle \cite{MR4629758}, relating the existence of these metrics to an algebro-geometric stability condition.
The interest in BHE metrics then comes from string theory as well as their link with the pluriclosed flow.
On one hand, the metric and the torsion three-form satisfy a supergravity equation arising from the string effective action \cite{Ivanov_2001,Polchinski,MR3110582}.
On the other, they are static points of the pluriclosed flow, being therefore crucial in understanding its long-time behavior and its use as a geometrization tool.
 
For a relatively long time Bismut flat metrics were the only known compact BHE examples. 
This scarcity is not accidental, indeed, while this is a natural non-K\"ahler counterpart of the relatively vast K\"ahler Calabi--Yau geometry, in the non-K\"ahler world, twisting the Einstein equation with a non-trivial torsion makes it remarkably more rigid.
A number of results in this direction show that, under mild additional assumptions, BHE metrics are forced to be Bismut flat.
In the homogeneous setting, it was proved in \cite{barbaro2024} that no non-flat BHE metric can exist on C-spaces.
Moreover, Gauduchon--Ivanov proved that the only non-K\"ahler BHE surfaces are the finite quotients of the standard Bismut flat Hopf surface \cite{MR1477631}; while, in arbitrary dimension, the structure of compact non-K\"ahler BHE manifolds was described in \cite{MR5008125}: such manifolds carry two commuting Killing fields of constant length, and hence a rank-$2$ toric foliation with a transversely balanced structure satisfying further analytic conditions. In complex dimension $3$ this reduces the BHE equation to a scalar equation on a K\"ahler surface.

Overall, whether non-trivial BHE metrics -- that is, metrics which are not locally Bismut flat -- exist at all on compact manifolds remained an open question for quite some time, with other partial results and contributions by \cite{MR4883225, MR5084158, MR5124088, MR5067254}.
This question was settled very recently in \cite{ALL}, where the first non-trivial examples were produced in complex dimension $3$. 
Working from the reduction in \cite{MR5008125}, they study the reduced equation on a K\"ahler orbifold surface where the powerful machinery of K\"ahler geometry applies. It is worth observing that some of the examples of \cite{ALL} are deformations of the Bismut flat structure on the Calabi--Eckmann threefold. Seen from this angle, their examples not only motivate the result in Theorem \ref{th: main}, but suggest a natural strategy for producing non-trivial BHE metrics in arbitrary dimension through a continuity argument starting from a Bismut flat structure.
In relation to this, we shall also notice that a compact Bismut flat manifold cannot admit any non-trivial BHE metric \footnote{This is a result obtained together with F. Pediconi that will happear soon.}.
Therefore, for such an argument to have any chance of getting off the ground, one first needs the deformed complex structures to carry pluriclosed metrics, on spaces admitting no Bismut flat metric and, crucially, one needs these metrics to depend smoothly on the parameter and to converge to the flat one. This is precisely what Theorem \ref{th: main} guarantees.

We end this introduction with a few further comments on this larger project. Being Theorem \ref{th: main} established, the step that follows is the analysis of the linearized Bismut Ricci form at a Bismut flat metric. In the K\"ahler setting this is exactly the mechanism underlying LeBrun--Simanca's deformation theory of constant scalar curvature metrics \cite{MR1274118}.
We shall highlight that while a LeBrun--Simanca type openness result is already available on the reduced K\"ahler side \cite{ALL}, what is missing is the corresponding statement upstairs, for deformations of the complex structure.
Here the obstruction space should not be expected to vanish in general, since Bismut flat manifolds have large symmetry groups. For example, the Hopf surfaces already show what could happen: there, the analogue of the deformation of Theorem \ref{th: main} does not produce BHE metrics; what one finds instead are {\em pluriclosed solitons} described in \cite{MR4023384,MR4287690, ZZsolitons}.
Determining whether the deformations of Theorem \ref{th: main} carry genuine BHE metrics, solitons, or neither in higher dimensions, and how the answer is governed by the geometry of the underlying toric foliation, is the question we intend to study next.

\medskip
\noindent {\it Acknowledgments.} The author is grateful to Vestislav Apostolov for insightful discussions.

\section{Preliminaries}\label{sec: prelim}
This section is devoted to the introduction of the notions that will be essential for the proof of the main result. 
Since the strategy of the proof is to regard our space as a foliation and deform this structure, while simultaneously deforming all transverse structures in a compatible way, we begin by describing the framework of abelian foliations. Within this setting, we briefly discuss the geometry of Bismut flat manifolds. Moreover, we introduce the appropriate cohomological tools. 
In particular, we recall the definition of the Aeppli cohomology and construct a family of elliptic operators adapted to the deformation of the foliation, designed to compute the corresponding basic cohomology along the deformation.

\subsection{Abelian Foliations}
Let $M$ be a compact manifold of dimension $m$ and denote by $\X(M)$ the infinite-dimensional Lie algebra of vector fields on $M$. 
Assume that $\t \subset \X(M)$ is an abelian $r$-dimensional subalgebra, with $r \leq m$, which is {\it free}, namely the rank of the evaluation map
$$
{\rm ev}_x : \t \to T_xM \,\, , \quad {\rm ev}_x(V) \defeq V_x
$$
is maximal for any $x \in M$. 
We call this data an abelian foliation on $M$, and we write $\F=(M,\t)$.
Notice that $\t$ gives rise to a regular $r$-dimensional distribution $\V \subset TM$ by setting $\V_x \defeq {\rm ev}_x(\t)$, which is called {\it vertical distribution}. 
By the Frobenius Theorem, $\V$ is integrable, and each maximal leaf of $\V$ is called {\it $\t$-orbit}. We remark that, although $M$ is locally equivalent to a principal bundle, the orbit space $M/\t$ can be highly non-smooth. 

A (possibly vector-valued) tensor field $\Phi$ on $M$ is said to be: {\it $\t$-invariant} if $\L_V\Phi =0$ for any $V \in \t$; {\it horizontal} if $\iota_V \Phi = 0$ for any $V \in \t$; {\it basic} if it is both $\t$-invariant and horizontal. 
Notice that the exterior differential $df$ of a $\t$-invariant function $f$ is basic. Moreover, the exterior differential $d\alpha$ of a basic form $\alpha$ is basic, and so this allows to define the {\it basic de Rham cohomology spaces} $H^\bullet_b(M,\F)$ (see, e.g., \cite[Section 7.1]{MR1362865} and references therein).

A smooth $\t$-valued $1$-form $\mu : TM \to \t$ is called {\it principal connection} if it is $\t $-invariant and verifies ${\rm ev}_x(\mu_x(V_x)) = V_x$ for any $V \in \t $, $x \in M$ \cite[Section 3.1]{MR1362865}. Consequently, the kernel $\H \defeq {\rm ker}(\mu) \subset TM$ is a regular $(m-r)$-dimensional distribution that is transverse to $\V$, which is called {\it horizontal distribution}. 
When a horizontal distribution is fixed, a vector field $X \in \X(M)$ is called: {\it horizontal} if $X_x \in \mathcal{H}_x$ for any $x \in M$; {\it basic} if it is both $\t $-invariant and horizontal.
It is straightforward to check that the curvature $2$-form $\Omega\in H^2(M,\t)$ defined as
\begin{equation*} \label{eq:defOmega}
\Omega \defeq d\mu
\end{equation*}
is basic. 
Moreover, by \cite[Proposition 7.5]{MR1362865}, the basic cohomology class $[\Omega] \in H^2_b(M,\F,\t)$ does not depend on the principal connection $\mu$, and is called {\it characteristic class of $\F$.}

\medskip

We now describe holomorphic and metric structures on $M$ which are compatible with the foliation structure.
\begin{defi*}
    Let $\F=(M,\t)$ be an abelian foliation, and $J$ be a complex structure on $M$. Suppose that $J$ is such that
    \begin{itemize}
        \item $\t$ {\it preserves $J$}, namely $\L_VJ=0$ for any $V \in \t$, or else $\t$ is the Lie algebra of a subgroup of the identity component of the group of biholomorphisms of $(M,J)$;
        \item $\t$ is {\it $J$-invariant}, namely $J\t = \t$.
    \end{itemize}
    then we call $(\F,J)$ a {\em holomorphic foliation} on $M$.
\end{defi*}

\noindent In this paper we will mostly work with structures which are defined on the normal bundle of a foliation, i.e. that are transverse to the leaves. We define them as follows.
\begin{defi*}
    Let $\F=(M,\t)$ be an abelian foliation, with connection $\mu$ detecting the horizontal complement $\H$.  
    A tensor $J\in\mathrm{Aut}(\H)$ is a {\em transverse complex structure} of $\F$ if
    \begin{itemize}
    \item $J^{2} = - \id$, 
    \item $\mathcal{L}_{Z}J=0$ for every $Z\in\t$ and
    \item the Nijenhaus tensor $N_{J}(X,Y)=[JX,JY]-J[JX,Y]-J[X,JY]-[X,Y]$ of $J$ vanishes on any horizontal vector fields $X$ and $Y$.
    \end{itemize}
    We call $(\F,\H,J)$ a {\em transversely holomorphic foliation} of $M$.
\end{defi*}
\begin{defi*}
     Let $\F=(M,\t)$ be an abelian foliation, with connection $\mu$ detecting the horizontal complement $\H$. A tensor $\check g \in \Cinf \left(\H^*\otimes\H^*\right)$ is a {\em transverse metric} of $\F$ if
    \begin{itemize}
    \item $\check{g}$ is symmetric, positive definite and
    \item $\L_{Z}\check{g}=0$ for every $Z\in\t$.
    \end{itemize}
    We call $(\F,\H,\check g)$ a {\em transversely Riemannian foliation} of $M$.
\end{defi*}

\noindent Merging these structures we obtain a {\em transversely Hermitian} structure. However, we directly give here the definition of {\em transversely K\"ahler} structure.
\begin{defi*}
    The data $(\F,\H,J,\check g)$ is a {\em transversely K\"ahler foliation} of $M$ if
    $(\F,\H,J)$ is a transversely holomorphic foliation of $M$, $(\F,\H,\check{g})$ is a transversely Riemannian foliation of $M$, and the tensor field $\check\omega$ defined by $\check\omega(X,Y) = \check{g}(X,JY)$ is antisymmetric and closed when regarded as a $2$-form on $M$ by the injection $\bigwedge^{2} \H^* \to \bigwedge^{2} T^{*}M$.
\end{defi*}

\noindent Notice that we can make sense of transverse structures even without fixing a connection for the foliation. However, we chose to define them in this way to simplify the exposition since for the purpose of this paper we will always be in the condition of having a preferred horizontal distribution.

\subsection{Transversal Laplacian}\label{subs: new Laplacian}
The main difficulty in working with the basic cohomology of a family of foliations is that the space of basic forms does not vary smoothly. 
The way of dealing with this fenomena is to consider horizontal forms and add the Lee derivative to the Laplacian in order to obtain harmonic forms which still are basic.
This idea derives from a construction El Kacimi--Alaoui and Hector used to prove the transverse Hodge decomposition for Riemannian foliations \cite{MR865667}, and was already successfully used to study the basic cohomology of Sasakian manifolds in \cite{MR3523249, MR4278212}. The material in this subsection is an adaptation of these latter works to our case, where the foliation structure is detected by an abelian subalgebra $\t\subset\X(M)$ instead of the Reeb vector field.

\medskip

Consider a transversely Riemannian foliation $(\F,\H,\check{g})$ on $M$ compact, with fibers of dimension $r$ and total space of dimension $\dim M=m=n+r$.
Fix a basis $(V_1,{\dots},V_{r})$ for $\t $ and consider the corresponding splitting of $\mu$:
$$
\mu = \mu_1 \otimes V_1 + {\dots} + \mu_{2k} \otimes V_{r} \,\, .
$$
We can define a double grading of the differential forms 
$\A^k(M)= \bigoplus_{i+j=k} \A_{i,j}(M,\F)$ where
$$\A_{i,j}(M,\F) \defeq \Cinf \left( \bigwedge^i \H^* \otimes \bigwedge^j \V^*\right).$$
Let us denote the space of horizontal $k$-form as 
$$\A_\H^k(M,\F)\defeq \Cinf\left(\bigwedge^k \H^*\right).$$
Then the space of basic $k$-form coincide with the subspace 
$$\A_b^k(M,\F)= \left\{ \alpha\in \A_\H^k(M,\F) \;|\; \L_V\alpha=0\;\forall V\in\t\right\}.$$
While the differential of a basic form is itself basic, this is not true for horizontal forms. Therefore, we have in a natural way a basic differential operator $d_b:\A_b^k(M,\F)\rightarrow\A_b^{k+1}(M,\F)$ that extends to $d_\H:\A_\H^k(M,\F)\rightarrow\A_\H^{k+1}(M,\F)$ as $d_\H=\pi_{\A_\H^{k+1}}\circ d$.
Now let $\star_\H:\A_\H^k(M,\F)\rightarrow\A_\H^{n-k}(M,\F)$ be the transverse Hodge-star operator with respect to $\check{g}$.
Notice that, since the metric $\check{g}$ is $\t$-invariant, the transverse Hodge-star operator restricts to a basic Hodge-star operator $\star_b:\A_b^k(M,\F)\rightarrow\A_b^{n-k}(M,\F)$. Indeed, $\L_V\circ \star_\H = \star_\H\circ\L_V$ for any $V\in\t$.
Moreover, $\star_\H$ induces a scalar product on $\A_\H^k(M,\F)$ as 
$$\left<\alpha,\beta\right> = \int_M \mu_1\wedge\cdots\wedge\mu_r\wedge\alpha\wedge\star_\H\beta, \quad\text{for }\alpha,\beta\in \A_\H^k(M,\F).$$
\begin{lem}
    The adjoint operator $d_\H^*$ of $d_\H$ with respect to $\left<\cdot,\cdot\right>$ is given by $d_\H^*=-\star_\H d_\H\,\star_\H$.
\end{lem}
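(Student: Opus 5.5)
The plan is to imitate the classical proof that $d^*=\pm\star d\star$ on a closed manifold, with the top form $\mu_1\wedge\cdots\wedge\mu_r\wedge(\cdot)$ playing the role of integration. The two extra points are that the $\mu_i$ are not closed and that $d$ does not preserve horizontality. Throughout I write $\mu^{\wedge}\defeq\mu_1\wedge\cdots\wedge\mu_r$. Fix $\alpha\in\A_\H^{k-1}(M,\F)$ and $\beta\in\A_\H^{k}(M,\F)$. The form $\mu^\wedge\wedge\alpha\wedge\star_\H\beta$ has degree $r+n-1=m-1$, so Stokes' theorem on the compact manifold $M$ gives
\begin{equation*}
0=\int_M d\left(\mu^\wedge\wedge\alpha\wedge\star_\H\beta\right).
\end{equation*}

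\emph{Step 1: the derivative of $\mu^\wedge$ drops out.} Expand $d\mu^\wedge=\sum_i\pm\,\mu_1\wedge\cdots\wedge d\mu_i\wedge\cdots\wedge\mu_r$. Each $d\mu_i=\Omega_i$ is basic, hence horizontal of degree $2$. Each summand of $d\mu^\wedge\wedge\alpha\wedge\star_\H\beta$ therefore contains only $r-1$ vertical covectors and must have horizontal degree $2+(k-1)+(n-k)=n+1>n=\operatorname{rank}\H$. So it vanishes pointwise.

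\emph{Step 2: replace $d$ by $d_\H$.} We are left with $\int_M(-1)^r\mu^\wedge\wedge d(\alpha\wedge\star_\H\beta)=0$. For a horizontal form $\gamma$, decompose $d\gamma$ according to the bigrading $\A_{i,j}(M,\F)$. Its components are $d_\H\gamma\in\A_{\bullet,0}$ together with pieces having at least one vertical leg: the Lie-derivative terms $\mu_i\wedge\L_{V_i}\gamma$ and curvature-type terms. Since $\mu^\wedge$ already saturates all vertical directions, wedging with it kills every component with $j\ge1$. Hence $\mu^\wedge\wedge d\gamma=\mu^\wedge\wedge d_\H\gamma$. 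Applying this together with the Leibniz rule for $d$ to $\gamma=\alpha\wedge\star_\H\beta$ yields
\begin{equation*}
\int_M\mu^\wedge\wedge d_\H\alpha\wedge\star_\H\beta=(-1)^{k}\int_M\mu^\wedge\wedge\alpha\wedge d_\H\star_\H\beta .
\end{equation*}

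\emph{Step 3: identify the right-hand side and fix the sign.} Use $\star_\H\star_\H=(-1)^{p(n-p)}$ on horizontal $p$-forms, applied with $p=k-1$ to write $d_\H\star_\H\beta=(-1)^{(k-1)(n-k+1)}\star_\H\star_\H d_\H\star_\H\beta$. The right-hand side then becomes $\langle\alpha,\epsilon\,\star_\H d_\H\star_\H\beta\rangle$ with $\epsilon=(-1)^{k+(k-1)(n-k+1)}$, so that
\begin{equation*}
d_\H^*=\epsilon\,\star_\H d_\H\star_\H .
\end{equation*}
Now $n$ is even, since the relevant transverse structures are Hermitian/K\"ahler and so $n$ is the real dimension of a complex bundle. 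Then $(k-1)(n-k+1)\equiv (k-1)^2\equiv k-1 \pmod 2$, hence $\epsilon=(-1)^{2k-1}=-1$. This gives $d_\H^*=-\star_\H d_\H\star_\H$, as stated. For odd $n$ the same computation gives the usual $k$-dependent sign instead.

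The only genuinely delicate point is Step 2. One has to check carefully that, although $d$ of a horizontal form has components with a vertical leg (coming both from $\L_{V_i}$ and from the curvature $\Omega$), all of them are annihilated by $\mu^\wedge$. Once this is done, everything else is sign bookkeeping.
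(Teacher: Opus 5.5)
Your proof is correct and follows the paper's argument: apply Stokes' theorem to $\mu_1\wedge\cdots\wedge\mu_r\wedge\alpha\wedge\star_\H\beta$, discard the $d\mu_i$ terms and all vertical components of $d$ by horizontal/vertical degree counting, and finish with the $\star_\H\star_\H$ sign bookkeeping. Your explicit remark that the stated sign requires $n$ to be even (true in the transversely K\"ahler setting where the lemma is used) makes precise a point that the paper's final equality uses without saying so.
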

\begin{proof}
    Take $\alpha\in\A_\H^{k-1}(M,\F)$ and $\beta\in\A_\H^{k}(M,\F)$. Then $\star_\H\beta\in \A_\H^{n-k}(M,\F)$. 
    By counting the horizontal/vertical degrees it is evident that 
    \begin{align*}
        d(\mu_1\wedge\cdots\wedge\mu_r\wedge\alpha\wedge\star_\H\beta)
            &= (-1)^r\mu_1\wedge\cdots\wedge\mu_r\wedge d\alpha\wedge\star_\H\beta + (-1)^{r+k-1}\mu_1\wedge\cdots\wedge\mu_r\wedge\alpha\wedge d\star_\H\beta
    \end{align*}
    We now compute
    \begin{align*}
        \left<d_\H\alpha,\beta\right> 
            &= \int_M \mu_1\wedge\cdots\wedge\mu_r\wedge d_\H\alpha\wedge\star_\H\beta \\
            &= \int_M \mu_1\wedge\cdots\wedge\mu_r\wedge d\alpha\wedge\star_\H\beta \\
            &= \int_M (-1)^k\mu_1\wedge\cdots\wedge\mu_r\wedge\alpha\wedge d\star_\H\beta\\
            &= \int_M (-1)^k\mu_1\wedge\cdots\wedge\mu_r\wedge\alpha\wedge d_\H\star_\H\beta
            = - \left<\alpha,\star_\H d_\H\star_\H\beta\right>
    \end{align*}
    proving the thesis.
\end{proof}
\noindent The lemma implies that $d^*_\H$ restricts to $d^*_b\defeq -\star_b\,d_b\,\star_b$ on the basic forms. In turns, this implies that the transverse Laplacian $\Delta_\H \defeq d_\H^*d_\H+d_\H d_\H^*$ restricts to the basic Laplacian $\Delta_b\defeq d_b^*d_b+d_bd_b^*$ on basic forms.
Recall that being $(\F,\H,\check{g})$ a transversely Riemannian foliation on a compact manifold $M$, the basic cohomology is given by $\Delta_b$-harmonic forms, namely
$$\mathbf{H}^k_b(M,\F) \defeq \ker \{\Delta_b:\A_b^k(M,\F)\rightarrow \A_b^k(M,\F)\}\cong H^k_b(M,\F).$$
Note that $\Delta_\H$ is only transversely strongly elliptic. For our later application of a theorem of Kodaira--Spencer we need a strongly elliptic operator acting on the space
of sections of a smooth bundle. 
We define it by
\begin{equation}\label{eq: new Laplacian}
    D \defeq \sum_{i=1}^r\L_{V_i}\L_{V_i} - \Delta_\H ,
\end{equation}
and notice that $\L_V$ preserves $\A_\H^k(M,\F)$ for any $V\in\t$.

\begin{lem}
    The differential operator $D$ is strongly elliptic and self-adjoint with respect to $\left<\cdot,\cdot\right>$.
\end{lem}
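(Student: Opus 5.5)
Both claims will be checked directly from the structure of $D$ as a sum of two pieces: the purely vertical second-order operator $\sum_i \L_{V_i}\L_{V_i}$ and the transverse Laplacian $\Delta_\H$. The operator $D$ acts on sections of the smooth vector bundle $\bigwedge^k\H^*$. This makes sense because each $\L_{V_i}$ preserves $\A^k_\H(M,\F)$, as noted after \eqref{eq: new Laplacian}, and $\Delta_\H$ preserves it by construction.

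\emph{Strong ellipticity.} I will compute principal symbols at a covector $\xi\in T^*_xM$, splitting $\xi=\xi_\H+\xi_\V$ according to $T^*M=\H^*\oplus\V^*$ (annihilators of $\V$ and $\H$ respectively). Locally, $\L_{V_i}$ acting on horizontal forms is $\nabla_{V_i}$ plus a zeroth-order term, so its principal symbol is $\im\,\xi(V_i)\,\id$. Hence $\sum_i\L_{V_i}^2$ has symbol $-\sum_i \xi(V_i)^2\,\id$, which is a negative definite quadratic form in $\xi_\V$ because $V_1,\dots,V_r$ span $\V_x$ by freeness.

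For $\Delta_\H$, we have $d_\H=\pi\circ d$, whose symbol is $\im\,\pi(\xi\wedge\cdot)=\im\,\xi_\H\wedge\cdot$ on horizontal forms. Its adjoint $d_\H^*=-\star_\H d_\H\star_\H$ then has symbol $-\im\,\iota_{\xi_\H^\sharp}$, where $\sharp$ is taken with respect to $\check g$. The standard Clifford identity gives the symbol of $\Delta_\H$ as $|\xi_\H|^2_{\check g}\,\id$. Therefore
\[
\sigma_D(\xi)=-\Bigl(\sum_i \xi(V_i)^2+|\xi_\H|^2_{\check g}\Bigr)\id .
\]
Since $\xi\mapsto \sum_i\xi(V_i)^2+|\xi_\H|^2$ is the squared norm of the metric $\check g\oplus(\text{metric making }V_i\text{ orthonormal})$, it is positive for $\xi\neq0$. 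Thus $-D$ is strongly elliptic, with symbol a positive scalar times the identity; the sign convention only fixes which of $\pm D$ has positive symbol.

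\emph{Self-adjointness.} $\Delta_\H$ is formally self-adjoint by the previous Lemma. It remains to show each $\L_{V_i}$ is skew-adjoint for $\langle\cdot,\cdot\rangle$, so that $\L_{V_i}^2$ is self-adjoint. I will use three facts:
\begin{itemize}
\item $\L_{V}$ commutes with $\star_\H$, by $\t$-invariance of $\check g$ (as noted in the text);
\item $\L_V\mu_j=0$, by $\t$-invariance of $\mu$ and abelianness of $\t$, since $\mu$ takes values in $\t$ and $[V,V_j]=0$;
\item $\L_V$ preserves $\H=\ker\mu$.
\end{itemize}
Then
\[
\L_V(\mu_1\wedge\cdots\wedge\mu_r\wedge\alpha\wedge\star_\H\beta)=\mu_1\wedge\cdots\wedge\mu_r\wedge(\L_V\alpha\wedge\star_\H\beta+\alpha\wedge\star_\H\L_V\beta).
\]
The left side equals $d\iota_V(\cdots)$, which integrates to zero on the closed manifold $M$. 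This gives $\langle\L_V\alpha,\beta\rangle=-\langle\alpha,\L_V\beta\rangle$.

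The main obstacle is the symbol computation of $\Delta_\H$. One must check that the projection $\pi$ and the terms involving the curvature $\Omega=d\mu$ (which appear in $d$ of a horizontal form) contribute only to lower order, so that they do not affect the principal symbol. This holds because these terms are algebraic in the form, not differential. One must also check that $\L_{V_i}$ has no horizontal-derivative component in its symbol, which holds since its symbol is $\xi(V_i)$ and $V_i$ is vertical.
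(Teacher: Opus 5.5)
Your proof is correct and essentially the paper's: the self-adjointness part (skew-adjointness of each $\L_V$ from $\L_V(\mu_1\wedge\cdots\wedge\mu_r\wedge\alpha\wedge\star_\H\beta)=d\iota_V(\cdots)$ and Stokes, together with the previous Lemma for $\Delta_\H$) is exactly the paper's argument. For strong ellipticity the paper only cites the Sasakian-case references, and your explicit computation $\sigma_D(\xi)=-\bigl(\sum_i\xi(V_i)^2+|\xi_\H|^2_{\check g}\bigr)\id$ is the standard argument behind those citations, correctly adapted to rank $r$; it also confirms that the two pieces of $D$ have principal symbols of the same sign.
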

\begin{proof}
    For the ellipticity see \cite[Lemma 4.11]{MR3523249} or \cite[Lemma 3.2]{MR4278212} (see also \cite[p. 224]{MR865667}). 
    The self-adjointness is also a straightforward adaptation of their arguments, which we report here.
    To show that $\L_V$ is skew-symmetric for any $V\in\t$, take $\alpha,\beta\in\A_\H^k(M,\F)$ and notice that
    \begin{align*}
        \mathcal{L}_V(\mu_1\wedge\cdots\wedge\mu_r\wedge\alpha\wedge\star_\H\beta) 
            &= \mu_1\wedge\cdots\wedge\mu_r\wedge\L_V\alpha\wedge\star_\H\beta + 
                \mu_1\wedge\cdots\wedge\mu_r\wedge\alpha\wedge\star_\H\L_V\beta.
    \end{align*}
    We also have
    $$\L_V(\mu_1\wedge\cdots\wedge\mu_r\wedge\alpha\wedge\star_\H\beta) = d\iota_V(\mu_1\wedge\cdots\wedge\mu_r\wedge\alpha\wedge\star_\H\beta).$$
    Therefore, after integrating the result follows by Stokes theorem. 
\end{proof}
\noindent The crucial property of the operator $D$ is that it depends smoothly on the data of abelian algebra $\t$ generating the foliation on $M$, and it is possible to recover the basic harmonic forms from it by taking the $\t$-invariant forms in $\ker D$. As a matter of fact, a $\t$-invariant form $\alpha$ in $\ker D$ satisfies
$$ 0 = D\alpha =  \sum_{i=1}^r\L_{V_i}\L_{V_i}\alpha - \Delta_\H \alpha = - \Delta_b \alpha .$$

\subsection{Bismut flat and flag manifolds}\label{subs: flat and flag} 
We now recall a few crucial aspects of the geometry of compact simply-connected Bismut flat manifolds $(G,J,g)$ and their cohomology.
Thanks to \cite[Theorem 1]{MR4127891} they are compact simply-connected Lie groups $G$ equipped with a bi-invariant metric $g$ and a compatible left-invariant complex structure $J$. 
By Milnor's Lemma \cite[Lemma 7.5]{MR425012}, the Lie group $G$ is semisimple and $(G,g)$ is isometric to the product of simple Lie groups $G=G^{(1)} \times \cdots \times G^{(s)}$ each equipped with a negative multiple of the {\em Cartan Killing form}.
Moreover, all the left-invariant complex structures $J$ on $G$ are given by the {\em Samelson construction} \cite{MR59287,MR994129}. 
More precisely, $J$ detects a maximal torus $T\subset G$ that verifies the following properties (set $\t=\Lie{T}$ and $\g=\Lie{G}$): 
\begin{itemize}
    \item[$a)$] $\t$ is $J$-invariant, that is, $J\t = \t$;
    \item[$b)$] $J$ is ${\rm ad}(\t)$-invariant, that is, $[V,JX] = J[V,X]$ for any $V \in \t$, $X \in \g$.
\end{itemize}
By means of the above conditions, it follows that $J$ also projects via the {\em Tits fibration} 
\begin{equation}\label{eq: Tits} \tag{T-fib}
    \pi:G\rightarrow G/T
\end{equation} 
onto a $G$-invariant complex structure on the {\em flag manifold} $G/T$, which we still call $J$.
The data of a maximal torus $T\subset G$, a linear complex structure on $\t$ and a $G$-invariant complex structure on $G/T$ completely characterize the left-invariant complex structure $J$.
Furthermore, an invariant complex structure on $G/T$ corresponds to a choice of a {\em system of positive roots} for the {\em Cartan decomposition} of $\g$ associated to $\t$.
Then, take a system of {\em simple} positive roots $\{\alpha_1,\ldots,\alpha_r\}\subset\t^*$ for $r$ the rank of $G$. Their coroots $\{V_1,\ldots,V_r\}\subset\t$ give a basis of $\t$.
Hence, the principal connection $\mu\in\A^1(G,\t)$ for \eqref{eq: Tits} associated to the bi-invariant metric $g$ (which is the same of the one associated to the negative of the Cartan Killing form) decomposes as
$$\mu = \sum_{j=1}^r V_j\otimes\mu_j, \quad\text{ with }\quad \mu_j\in\A^1(G) \quad\text{for }j=1,\ldots,r. $$
In the same way the curvature form decomposes in {\em fundamental weights} $\Omega_1,\ldots,\Omega_r\in \A^2(G/T)$.
It is known (see e.g. \cite{MR880184}) that the invariant K\"aher metrics on $(G/T,J)$ are all of the form
\begin{equation}\label{eq: flag kahler}
    \omega_K = \sum_{j=1}^rc_j\Omega_j, \quad\text{ with }\quad c_j>0 \quad\text{for }j=1,\ldots,r.
\end{equation}
This can be traced back to the fact that the cohomology of $G/T$ is generated by the unit and the classes of the $\Omega_j$'s \cite{MR51508,MR102800}.
Furthermore, the only relations in cohomology between them are those given by the polynomials $\R[\Omega_1,\ldots,\Omega_r]$ which are invariant with respect to the action of the {\em Weil group}.
In particular, the first non-trivial relations occur in degree $2$ and are
\begin{equation}\label{eq: vanishing Q-form}
    \sum_{j=1}^{r_i}\left[\Omega_j^{(i)}\wedge\Omega_j^{(i)}\right] = 0,
    \quad\text{for }i=1,\ldots,s,
\end{equation}
where $T=T^{(1)}\times\cdots\times T^{(s)}$ with $T^{(i)}\subset G^{(i)}$ maximal torus of dimension $r_i$ for $i=1,\ldots,s$, and we used the superscript $^{(i)}$ to distinguish the simple components.
Building on this description, in \cite{barbaro2024} the cohomology of compact simply-connected Bismut flat manifolds was computed using the Tarn\'e model \cite{MR1255937}.

\subsection{Aeppli cohomology}
Given a complex manifold $(M,J)$, the {\em Aeppli cohomology} and {\em Aeppli number} in bi-degree $(p,q)$ are defined as
$$H_A^{p,q}(M,J) \defeq \frac{\ker \,\big( \partial\overline{\partial}: \mathcal{A}^{p,q}\rightarrow \mathcal{A}^{p+1,q+1}\big)}{\imm\,\big(\partial:\mathcal{A}^{p-1,q}\rightarrow \mathcal{A}^{p,q}\big) + \imm\,\big(\overline{\partial}:\mathcal{A}^{p,q-1}\rightarrow \mathcal{A}^{p,q}\big)}, \quad h^{p,q}_A(M,J)\defeq\dim H_A^{p,q}(M,J) .$$
Since the Aeppli cohomology groups can be realized as the kernel of a fourth-order elliptic operator \cite{IWASAWA_cohomology}, the Aeppli numbers are upper semicontinuous under deformations of the complex structure.
It is moreover evident that a pluriclosed metric $\omega$ on $(M,J)$ gives a $(1,1)$-Aeppli class, $[\omega]\in H^{1,1}_A(M,J)$. 
It turns out that, on compact manifolds, this class cannot vanish \cite{barbaro2024,marouani2023,MR4157573}. Hence, the Aeppli cohomology provides an obstruction to the existence of pluriclosed metrics on compact complex manifolds.
Furthermore, it plays the same fundamental role for pluriclosed metrics as the Bott--Chern cohomology does for K\"ahler metrics within the framework of Kodaira--Spencer theory. 
\begin{theo}[Theorem 8.11 in \cite{cavalcanti13}]\label{theo: Cavalcanti}
    Let $(M,J,g)$ be a pluriclosed manifold. Consider $0\in\U\subset\R^\ell$ an open neighborhood of the origin and let $\{J_t\}_{t\in\U}$ be a smooth family of complex structures with $J_0=J$. If the Aeppli number $h^{1,1}_A(M,J)$ is constant on $\U$, then there is a (possibly smaller) open neighborhood $0\in\U'\subset\U$ and family of metrics $\{g_t\}_{t\in\U'}$ varying smoothly on $t$ such that $(J_t,g_t)$ is a pluriclosed structure for $t\in\U'$ and $g_0=g$.
\end{theo}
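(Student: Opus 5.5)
The plan is to run the Kodaira--Spencer stability argument, with the Bott--Chern Laplacian of the K\"ahler case replaced by the Aeppli Laplacian. Fix once and for all the Riemannian metric $g=g_0$. For each $t\in\U$, consider the Hermitian metric $g_t$ obtained by averaging $g$ with respect to $J_t$. Using $g_t$, form the formal adjoints $\p_t^*,\bar\p_t^*$ and the Aeppli Laplacian on $(1,1)_t$-forms,
$$
\Delta_{A,t} = \p_t\p_t^*+\bar\p_t\bar\p_t^* + \bar\p_t^*\p_t^*\p_t\bar\p_t+\p_t\bar\p_t\bar\p_t^*\p_t^* + \p_t\bar\p_t^*\bar\p_t\p_t^*+\bar\p_t\p_t^*\p_t\bar\p_t^* + \p_t\p_t^*+\bar\p_t\bar\p_t^*.
$$
This is a fourth-order strongly elliptic self-adjoint operator, as in \cite{IWASAWA_cohomology}. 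The bundles $\Lambda^{1,1}_{J_t}$ form a smooth family, so after identifying them with $\Lambda^{1,1}_{J_0}$ through a smooth family of bundle isomorphisms, $\{\Delta_{A,t}\}$ becomes a smooth family of elliptic operators on a fixed bundle. Its kernel is isomorphic to $H^{1,1}_A(M,J_t)$, so by hypothesis $\dim\ker\Delta_{A,t}$ is constant in $t$.

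The key analytic input is then the Kodaira--Spencer theorem: for a smooth family of strongly elliptic self-adjoint operators with kernel of constant dimension, the harmonic projection $H_{A,t}$ and the Green operator $G_{A,t}$ depend smoothly on $t$. This is the step I expect to be the main obstacle, and the only place where the hypothesis is used. Everything else is algebraic. What must be checked carefully here is that the identifications of the bundles and the dependence of the adjoints on $t$ are genuinely smooth in the $\Cinf$ topology. I would follow the classical proof verbatim, via the eigenspace decomposition and the resolvent.

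Next I would use $\mathrm{Id}=H_{A,t}+\Delta_{A,t}G_{A,t}$ and sort the terms of $\Delta_{A,t}$. Every term except $\bar\p_t^*\p_t^*\p_t\bar\p_t$ lies in $\imm\p_t+\imm\bar\p_t\subset\ker\p_t\bar\p_t$, and so does $H_{A,t}$. It follows that
$$
P_t\defeq \mathrm{Id}-\bar\p_t^*\p_t^*\p_t\bar\p_t\,G_{A,t}
$$
is the ($g_t$-orthogonal) projection of $\A^{1,1}_{J_t}$ onto $\ker\p_t\bar\p_t$. By the previous step it is smooth in $t$. It also commutes with complex conjugation, because $\Delta_{A,t}$ is a real operator, so it preserves real forms.

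Finally, let $\omega$ be the fundamental form of $g$ and let $\pi_t\omega$ denote its $(1,1)$-component with respect to $J_t$. Set $\omega_t\defeq P_t(\pi_t\omega)$. This is real, of type $(1,1)$ for $J_t$, and $\p_t\bar\p_t$-closed. It depends smoothly on $t$, and $\omega_0=P_0\omega=\omega$ because $\omega$ is pluriclosed. Positivity is an open condition on a compact manifold and $\omega_t\to\omega$ in $\Cinf$. Hence there is a smaller neighbourhood $\U'\ni0$ on which $\omega_t$ is positive, and $g_t\defeq\omega_t(\cdot,J_t\cdot)$ gives the required smooth family of pluriclosed metrics with $g_0=g$.
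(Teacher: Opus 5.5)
Your argument is correct. The paper does not prove this statement. It quotes it from Cavalcanti's generalized-geometry treatment of pluriclosed deformations and uses it as a black box to obtain (ii) once Step 7 gives $h^{1,1}_A(G,J_t)=1$. Its only comment is that Aeppli cohomology plays for pluriclosed metrics the role Bott--Chern cohomology plays in the Kodaira--Spencer stability theorem. You turn that remark into a self-contained proof with three ingredients:
\begin{itemize}
\item Schweitzer's fourth-order Aeppli Laplacian on $(1,1)$-forms;
\item the Kodaira--Spencer theorem that the harmonic projection and Green operator of a smooth family of strongly elliptic self-adjoint operators with kernel of constant dimension depend smoothly on $t$;
\item the explicit projector $P_t=\mathrm{Id}-\bar\p_t^*\p_t^*\p_t\bar\p_t G_{A,t}$ onto $\ker\p_t\bar\p_t$.
\end{itemize}
Your route makes it visible that constancy of $h^{1,1}_A(M,J_t)$ is the only input, and it gives the deformed metric by the explicit formula $\omega_t=P_t(\pi_t\omega)$. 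The citation instead places the result inside Cavalcanti's broader deformation theory (e.g.\ obstructions in $H^{2,1}_{\bar\p}$, as recalled in the introduction), which the paper does not need here.

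A few points to tidy up.
\begin{itemize}
\item The terms $\p_t\p_t^*+\bar\p_t\bar\p_t^*$ appear twice in your operator. This is harmless, since principal symbol and kernel are unchanged, but remove the duplicate.
\item The harmonic projection $H_{A,t}$ lands in $\ker\p_t\bar\p_t$, not in $\imm\p_t+\imm\bar\p_t$.
\item To conclude that $P_t$ fixes $\ker\p_t\bar\p_t$, which is exactly what gives $P_0\omega=\omega$, state the orthogonality $\imm(\bar\p_t^*\p_t^*)\perp\ker\p_t\bar\p_t$. Then for $u\in\ker\p_t\bar\p_t$ the difference $u-P_tu$ lies in both spaces and hence vanishes.
\item After transporting $\Delta_{A,t}$ to $\Lambda^{1,1}_{J_0}$, the operator is self-adjoint only for the transported, $t$-dependent $L^2$ product. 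So invoke the form of the Kodaira--Spencer theorem that allows $t$-dependent fibre metrics and volume forms; this is the form already needed in the classical K\"ahler stability argument.
\item $g_t$ denotes both the $J_t$-averaged background metric and the output pluriclosed metric; rename one of them.
\end{itemize}
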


This makes the Aeppli cohomology a powerful tool in studying pluriclosed geometry. For example, the cone of pluriclosd metrics on a compact simply connected Bismut flat manifold $(G,J,g)$ has been described in \cite{barbaro2024} in terms of the Aeppli cohomology. More precisely, for any pluriclosed metric $\omega$ there exists a $J$-Hermitian bi-invariant (hence Bismut flat) metric $\omega_{BF}$ such that $[\omega]=[\omega_{BF}]$ in $H^{1,1}_A(G,J)$.
This description was obtained from the following result, which we will also need.
\begin{theo}[Theorem 3.1 in \cite{barbaro2024}]\label{th: h11 Bflat}
    Let $G$ be a semisimple Lie group equipped with a bi-invariant metric $g$ and a compatible Samelson complex structure $J$. Suppose that $(G,J)$ is irreducible, then 
    $$h_A^{1,1}(G,J)= 1.$$
\end{theo}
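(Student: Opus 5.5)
The plan is to compute $H^{1,1}_A(G,J)$ on invariant forms, and then to reduce the $\p\bar\p$-closedness condition to the single quadratic relation \eqref{eq: vanishing Q-form} in the cohomology of the flag manifold $G/T$.

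\textbf{Step 1: reduction to invariant forms.} For a Samelson complex structure on a compact semisimple $G$, the Tarn\'e model \cite{MR1255937} used in \cite{barbaro2024} computes the double complex of $(G,J)$ up to $E_1$-isomorphism. I would use it to replace $H^{1,1}_A(G,J)$ by the Aeppli cohomology of the complex of $T$-invariant (equivalently, here, left-invariant) forms. These are built from the basic forms of the Tits fibration \eqref{eq: Tits} together with the connection forms $\mu_j$. Concretely, I fix a basis $\theta_1,\dots,\theta_m$ of $(1,0)$-forms on $\t$, obtained as complex combinations of the $\mu_j$ adapted to $J|_\t$. Then $\bar\p\theta_a=\pi^*\Omega_a$, where $\Omega_a$ is a complex combination of the fundamental weights $\Omega_j$ and is of type $(1,1)$ on $G/T$, and $\p\theta_a=0$ on the invariant level.

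\textbf{Step 2: decompose a closed class.} A real invariant $(1,1)$-form can be written as
\[
\omega=\sum_{a,b}A_{a\bar b}\,\im\,\theta_a\wedge\bar\theta_b+\beta+\pi^*\eta .
\]
Here $A$ is a constant Hermitian matrix, $\beta$ collects the mixed vertical--horizontal terms, and $\eta$ is an invariant $(1,1)$-form on $G/T$. Mixed terms pair $\theta$'s with root spaces of nonzero weight, so $\mathrm{ad}(\t)$-invariance should kill $\beta$. Horizontal ad$(\t)$-invariant $(1,1)$-forms are spanned by the $\Omega_j$. Since $\p\bar\theta_a$ and $\bar\p\theta_a$ already span all of them, $\pi^*\eta$ is Aeppli-trivial. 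Thus every class is represented by the vertical part $A$, and $A$ is determined modulo nothing further.

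\textbf{Step 3: the closedness condition.} I compute
\[
\p\bar\p(\theta_a\wedge\bar\theta_b)=\pm\,\pi^*(\Omega_a\wedge\bar\Omega_b).
\]
Hence $\p\bar\p\omega=0$ means that $\sum A_{a\bar b}\Omega_a\wedge\bar\Omega_b$ vanishes as a form, after $\eta$ has been adjusted. Since $G/T$ is K\"ahler, the $\p\bar\p$-lemma converts this into the condition that the corresponding class in $H^4(G/T)$ vanishes. Rewriting $A$ as a $J$-compatible real quadratic form $q_A$ on $\t$, the class is $q_A(\Omega_1,\dots,\Omega_r)$. By Borel's description, the degree-$4$ relations are exactly the Weyl-invariant quadrics \eqref{eq: vanishing Q-form}. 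For $(G,J)$ irreducible I would argue that this forces $q_A$ to be a multiple of the Killing form restricted to $\t$. The multiple is admissible because $g$ is $J$-compatible. This gives $h^{1,1}_A=1$, spanned by the class of $\omega_g$.

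\textbf{Main obstacle.} I expect Step 1 to be the delicate point: justifying that invariant forms compute Aeppli cohomology, and not only Dolbeault cohomology. I would try first to deduce it from the Tarn\'e model isomorphism at the level of the full double complex, using that Aeppli groups are determined by the $E_1$-isomorphism type. A secondary issue is the irreducibility step. If $G$ has several simple factors, the relations \eqref{eq: vanishing Q-form} are independent and $q_A$ could a priori be any combination of the factor Killing forms. So it must be checked exactly how the hypothesis ``$(G,J)$ irreducible'' rules out such combinations.
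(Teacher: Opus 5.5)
Your overall strategy matches the one behind the cited result, and it is the computation the paper itself carries out in Step~7 of the proof of Theorem~\ref{th: main}. A finite-dimensional model (Tanr\'e's model, or the model \eqref{a-model} from \cite{MR3918619}) reduces $(1,1)$-Aeppli classes to vertical matrices $A$. Then $\p\bar\p$-closedness becomes the condition that $q_A(\Omega_1,\dots,\Omega_r)$ is a Weyl-invariant quadric as in \eqref{eq: vanishing Q-form}. Deferring Step~1 to the model is legitimate. Note, though, that the complex you actually use is that of left-invariant, $\mathrm{ad}(\t)$-invariant forms; ``$T$-invariant'' and ``left-invariant'' are not equivalent. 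Two steps as written are not correct, however.

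\textbf{Step~2.} The claim that horizontal $\mathrm{ad}(\t)$-invariant $(1,1)$-forms are spanned by the $\Omega_j$ is false. They are spanned by the forms $\xi_\alpha\wedge\bar\xi_\alpha$, with $\xi_\alpha$ dual to the root vector of $\alpha$ and $\alpha$ ranging over all positive roots. The $\Omega_j$ span only the closed ones; for $SU(3)$ this is dimension $3$ versus $2$. A general such $\pi^*\eta$ is not even $\p\bar\p$-closed, so it does not lie in $\imm\p+\imm\bar\p$. Nor can it be dropped: as you use in Step~3, it is needed to solve $\p\bar\p\,\pi^*\eta=-\sum A_{a\bar b}\,\Omega_a\wedge\bar\Omega_b$. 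The correct argument runs as follows.
\begin{itemize}
\item For fixed $A$, an invariant correction $\eta$ exists iff $[q_A(\Omega)]=0$ in $H^4(G/T)$.
\item Two corrections differ by a $G$-invariant pluriclosed $(1,1)$-form on $G/T$. By the $\p\bar\p$-lemma, averaging, and the absence of nonzero $G$-invariant $1$-forms on $G/T$, this difference is closed. Hence it is a combination of the $\Omega_j$, hence Aeppli-exact.
\item The map $A\mapsto[\omega]$ is injective, because in the invariant complex $\imm\p+\imm\bar\p$ in bidegree $(1,1)$ is spanned by the horizontal forms $\bar\p\theta_a$ and $\p\bar\theta_a$.
\end{itemize}
In the paper's model \eqref{a-model} this issue does not arise, since there the horizontal part lies in $H^{1,1}_b$, which genuinely is spanned by the $\Omega_j$.

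\textbf{Irreducibility step.} This step is missing, and its target is misstated. The admissible $q_A$ need not be a multiple of the Killing form of $\g$. For a Calabi--Eckmann structure on $SU(2)\times SU(2)$ they are multiples of $aB_1+bB_2$, with $a/b$ dictated by $J$; here $B_i$ are the Killing forms of the simple factors. They should be multiples of $g|_{\t}$.

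What you must show is that the $J$-invariant elements of $\mathrm{span}\{B_1|_{\t^{(1)}},\dots,B_s|_{\t^{(s)}}\}$ form a line. Suppose $q$ is such an element not proportional to $g|_{\t}$, and write $q=g(S\cdot,\cdot)$. Then $S$ is $g$-symmetric and commutes with $J$, since both forms are $J$-invariant and $J$ is $g$-orthogonal. Moreover $S$ acts by a scalar on each $\t^{(i)}$, with at least two distinct values. So some eigenspace is a proper $J$-invariant sum $\t_I=\bigoplus_{i\in I}\t^{(i)}$, and its complement $\t_{I'}$ is $J$-invariant as well. Since the root-space part of a Samelson structure already splits along simple factors, this gives $(G,J)\cong(G_I,J_I)\times(G_{I'},J_{I'})$, contradicting irreducibility.
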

\noindent In the above, {\em irreducible} means that $(G,J)$ does not biholomorphically decompose as a product of complex manifolds. 
Notice that any Samelson complex structure $J$ on a semisimple Lie group $G$ splits as a product of irreducible ones $(G,J)\cong (G_1,J_1)\times\cdots\times(G_p,J_p)$ (see \cite{barbaro2024} for the details).

\section{Pluriclosed deformations}\label{s: main}
This section is devoted to proving our main theorem about the existence of pluriclosed deformations of Bismut flat structures. 
The idea is to emulate the fenomena happening in sasaki geometry. To give an explainatory example, we can consider the Calabi Eckman threefold $\S^3\times\S^3$ with its standard structure which is in particular Bismut flat. We are allowed to deform the Sasaki structure of each of the sphere by moving the Reeb vector field in a two dimensional abelian algebra. We then equip the product $\S^3\times\S^3$ with the new complex structure associated to the sasaki structures as in \cite{MR5124088}. This deformation proces breaks the symmetries of the complex structure, which is not invariant anymore, hence cannot admit a Bismut flat metric. Nonetheless, it still admitis a pluriclosed structure by \cite{MR5124088, MR4733370}. In the following proof, we generalize this proces by deforming, instead of the Reeb field, the whole foliation detected by the maximal torus associated to J.

\begin{proof}[Proof of Theorem \ref{th: main}]
    We suppose, without loss of generality, that $(G,J)$ is irreducible. Indeed, the following argument can be repeated on each irreducible component. 
    We recall that with this assumption it holds $h_A^{1,1}(G,J)=1$, see Theorem \ref{th: h11 Bflat}.
    For clarity, the proof is structured in several steps, in which we construct the smooth deformation $\{J_t\}_{t\in\U}$ of the complex structure $J$ satisfying (i), and we prove that $h_A^{1,1}(G,J_t)=1$.
    Then, the condition (ii) will follow from Theorem \ref{theo: Cavalcanti}.
    \medskip
    
    {\bf Step 0:} {\em Deforming the foliation.}\\
    By hypothesis, $G$ is a semisimple Lie group $G=G^{(1)} \times \cdots \times G^{(s)}$ and the complex structure $J$ is $G$-left-invariant, hence, given by the Samelson construction (see Section \ref{subs: flat and flag}).
    This identifies a maximal torus $T=T^{(1)}\times\cdots\times T^{(s)}\subset G$,  with $T^{(i)}\subset G^{(i)}$ maximal tori of dimension $r_i$. Set $\t\defeq\Lie{T}\leq\g\defeq\Lie{G}$ and $\t^{(i)}\defeq\Lie{T^{(i)}}\leq\g^{(i)}\defeq\Lie{G^{(i)}}$, for $i=1,\ldots,s$.
    We can look at $\t$ as a subset of $\X(G)$ giving a foliation structure on $G$ (this is actually the $T$-principal bundle structure given by the Tits fibration \eqref{eq: Tits}).
    We consider $\H$ the horizontal complement given by the metric $g$. The complex structure $J$ restricts to $\H$ giving a transversely holomorphic foliation. The transversely holomorphic complex structure $J_{|_\H}$ can be identified with the complex structure induced by $J$ on the generalized flag $G/T$ through the Tits fibration.
    We remark that by hypothesis the metric $g$ is bi-invariant and the complex structure $J$ is left-invariant and right-$T$-invariant. 
    Thus the automorphism group is $\mathrm{Aut}^0(G,J,g) \cong G^L\times T^R$, with Lie algebra $\mathfrak{aut}(G,J,g) = \g^L+\t^R$, where we use superscripts $^R$ and $^L$ to distinguish the right and left actions.
    Now consider the Lie algebra $\t^L+\t^R$, which is abelian since the right and left torus actions commute. 
    Notice that the distribution $\H$ and the complex structure $J_{|_\H}$ are $\t_t$-invariant, for any $r$-dimensional abelian Lie algebra $\t_t\subset\t^L+\t^R$, thus giving a principal connection and a transversely holomorphic complex structure for the $\t_t$-foliation on $G$.
    We therefore consider $\t_t = \t_t^{(1)}+\cdots+\t_t^{(s)}$ with $\t_0=\t=\t^L$ and $\t_t^{(i)}$ abelian $r_i$-dimensional subalgebra of $\t^{(s)L}+\t^{(s)R}$ varying smoothly on $t\in\R^{r_i^2}$ for $i=1,\ldots,s$.
    This gives a smooth family of abelian foliations $\F_t=(G,\t_t)$, which are  transversely holomorphic foliations $(\F_t,\H,J_{|_\H})$ on $G$. 
    Here the moduli space $\R^{r_1^2+\cdots+r_s^2}$ is modeled on the Grasmanians $\mathsf{Gr}(r_1,2r_1)\times\cdots\times\mathsf{Gr}(r_s,2r_s)$.
    \medskip

    {\bf Step 1:} {\em Extending the complex structure.}\\
    We define the complex structure $J_t$ on $G$ by extending $J_{|_\H}$. To do so, we only need a linear complex structure on $\t_t$ varying smoothly on $t$. 
    As a matter of fact, the horizontal holomorphic distribution $\H^{1,0}$ is the same for any $J_t$ and it satisfies
    $$\left[\H^{1,0},\H^{1,0}\right]\subset\H^{1,0}$$
    thanks to the integrability of $J$ and the fact that the curvature form of the Tits fibration is of type $(1,1)$.
    Then, $[V,\H^{1,0}]=0$ for any $V\in \t_t$ since both the horizontal distribution and $J_{|_\H}$ are $\t_t$ invariant.
    Finaly, the fibers will be complex submanifolds.
    The precise definition of $J_{|\t_t}$ will be given later.
    
    \medskip
    
    {\bf Step 2:}{\em Proving (i).}\\
    Suppose that $J_t$ is left-invariant with respect to some group structure on $G$, then this group must be $G'$ isomorphic to $G$, and $J_t$ is a Samelson complex structure on $G'$. 
    Therefore, $J_t$ detects a maximal torus $H\subset G'$ with associated Tits fibration $\pi':G'\rightarrow G'/H$ on the flag manifold $G'/H$.
    Moreover, $J_t$ induces a complex structure on $G'/H$ (which we call $(J_t)_|$) that admits a K\"ahler metric $\omega_K$.
    Let us call $\g'$ and $\h$ the Lie algebras of $G'$ and $H$ respectively. Then, by construction, $\t_t$ is a relatively compact $J_t$-invariant abelian algebra of $J_t$-holomorphic vector fields, and $\t_t\subset\mathfrak{aut}(G',J_t)$. 
    We can consider the image of $\t_t$ through the projection $\rho:\mathfrak{aut}(G',J_t)\rightarrow \mathfrak{aut}(G'/H, (J_t)_|)$, and take the average of $\omega_K$ by the image of the closure of $\t_t$ (we still call it $\omega_K$).
    Therefore, for any vector $X\in\t_t$, $\rho(X)$ is $(J_t)_|$-holomorphic and $\omega_K$-Killing. Since $\t_t$ is $J_t$-closed, the same is true for $(J_t)_| \rho(X)=\rho(J_tX)$ as well. It follows that $\nabla^{g_K}\rho(X)=0$ where $\nabla^{g_K}$ is the Levi--Civita connection of $\omega_K$.
    However, there are no non-trivial parallel vector fields on the flag manifolds, hence $\rho(X)=0$.
    Since the Kernel of the projection $\rho$ is $\ker(\rho)=\h^L$, and the dimensions match, it holds $\t_t=\h^L$. 
    In other words $\t_t\subset \g'$ is the maximal torus of the Samelson construction, and the orbit of $\t_t$ is the maximal torus $H\subset G$.
    However, all the $\t_t$ with irrational slope have non-closed orbits dense in tori of dimension grater than $r$, giving the contradiction that proves (i).
    \medskip

    {\bf Step 3:} {\em Detecting a transversely K\"ahler metric.}\\
    The horizontal distribution $\H$ gives a smooth family of principal connections $\mu_t\in\bigwedge^1(M,\t_t)$ for the abelian foliations $\F_t$.
    Their curvatures $\Omega_t$ are closed basic $(1,1)$-forms $\Omega_t\in \A_b^{1,1}(M,\F_t,\t_t)$.
    Indeed,
    $$\Omega_t\left(\H^{1,0},\H^{1,0}\right) = \mu_t\left(\left[\H^{1,0},\H^{1,0}\right]\right) = 0.$$
    Let us fix coroots $V_1,\ldots,V_r$ generating $\t$ associated to $J$ as in Section \ref{subs: flat and flag}. We extend these to a family $V_{t,1},\ldots,V_{t,r}$ varying smoothly on $t$ and generating $\t_t$, and decompose the curvature form accordingly, 
    $$\Omega_t = \sum_{j=1}^rV_{t,j}\,\otimes \Omega_{t,j} \quad \text{with }\quad \Omega_{t,j}\in \A_b^{1,1}(M,\F_t) \quad \text{for } j=1,\ldots,r.$$
    For $t=0$ the curvature form gives a K\"ahler form on $G/T$ as $\Omega_{0,1}+\cdots+\Omega_{0,r}$ thanks to \eqref{eq: flag kahler}. 
    Then, for small $t$, $\Omega_{t,1}+\cdots+\Omega_{t,r}$ is non-degenerate, hence, gives a transversely K\"ahler structure on $(\F_t,\H)$.
    \medskip
    
    {\bf Step 4:} {\em Computing the basic de Rham cohomology.}\\
    We shall consider here any simple component $G^{(i)}$ on its own. So let us relabel the vector fields generating $\t_t$ and the connection and curvature forms accordingly as
    \[
        V_{t,j}^{(i)}, \; \mu_{t,j}^{(i)},\; \text{and } \Omega_{t,j}^{(i)}, \quad \text{for } i=1,\ldots,s, \;\text{and } j=1,\ldots,r_i.
    \]
    For any $i=1,\ldots,s$ we have a family of foliations $\F_t^{(i)}=\big(G^{(i)},\t_t^{(i)}\big)$ which are transversely K\"ahler with the horizontal distribution $\H^{(i)}\defeq\H\cap TG^{(i)}$, the restriction of $J$ to it (which coincide with the complex structure induced on $G^{(i)}/T^{(i)}$), and the transversal metric $\Omega_{t,1}^{(i)}+\cdots+\Omega_{t,r_i}^{(i)}$.
    Thus, \cite[Theorem 4.13]{MR3918619} guaranties the existence of a model for the de Rham cohomology of $G^{(i)}$ induced by the foliation structure.
    Defining $\W_t^{(i)}$ as the space generated by the connection forms $\mu_{t,j}^{(i)}$, the model is given by
    \begin{equation}\tag{dR-model}\label{dR-model}
        \Psi_{dR}:\left(H^\bullet_b\left(G^{(i)},\F_t^{(i)}\right)\otimes\bigwedge\W_t^{(i)},d'\right) \rightarrow \left(\A^\bullet G^{(i)},d\right) \,,
    \end{equation}
    that means that $\Psi_{dR}$ induces an isomorphism in cohomology.
    Notice that $d'\equiv0$ on $H^\bullet_b\big(G^{(i)},\F_t^{(i)}\big)$, and $d'\equiv d$ on $\W_t^{(i)}$ since $d\W_t^{(i)}$ is naturally embedded in $H^2_b\big(G^{(i)},\F_t^{(i)}\big)$.
    Recall that the Betti numbers in low degrees are the same for any compact simply connected simple Lie group, and are $b_0=1,\, b_1=0,\, b_2=0,\, b_3=1,\, b_4=0$.
    A first easy consequence, coming from $b_1=0$, is that $H^1_b\big(G^{(i)},\F_t^{(i)}\big)=0$.
    Moreover, $b_1=0$ also implies that $d'\mu^{(i)}_{t,j}\neq0$ in \eqref{dR-model}, that means $\Omega^{(i)}_{t,j}\in H^2_b\big(G^{(i)},\F_t^{(i)}\big)$ for all $j=1,\ldots,r_i$.
    On the other hand, $b_2=0$ implies that for any $\alpha\in H_b^2\big(G^{(i)},\F_t^{(i)}\big)$ it holds $\alpha=d'\beta$ for some $\beta\in\W_t^{(i)}$. Hence, $H^2_b\big(G^{(i)},\F_t^{(i)}\big)$ is precisely generated by $\Omega^{(i)}_{t,1},\ldots,\Omega^{(i)}_{t,r_i}$.
    Now, in order to realize $b_3=1$, one of the following has to be true:
    \begin{itemize}
        \item[(1)]  There exists $\eta\in H^3_b\big(G^{(i)},\F_t^{(i)}\big)$ which is not $d'$-exact,
        \item[(2)] There exists $\alpha\in H^2_b\big(G^{(i)},\F_t^{(i)}\big)\otimes\W_t^{(i)}$ such that $d'\alpha=0$.
    \end{itemize}
    Let us suppose (1) holds.
    Since the foliation is homologically oriented by \cite[Proposition 4.8]{MR3918619} and transversely K\"ahler, the basic cohomology satisfies the Hodge decomposition \cite{MR1042454}
    $$\overline{H^{p,q}_b}\left(G^{(i)},\F_t^{(i)}\right) = H^{q,p}_b\left(G^{(i)},\F_t^{(i)}\right), \quad\text{and }\quad H_b^k\left(G^{(i)},\F_t^{(i)},\C\right)=\bigoplus_{p+q=k}H_b^{p,q}\left(G^{(i)},\F_t^{(i)}\right).$$ 
    Therefore, there actually exist two different elements $\eta_1,\eta_2\in H^3_b\big(G^{(i)},\F_t^{(i)}\big)$, and $b_3=1$ forces the existence of some $\gamma\in H_b^1\big(G^{(i)},\F_t^{(i)}\big)\otimes\W_t^{(i)}$ such that $d'\gamma$ is a non-trivial linear combination of $\eta_1,\eta_2$. However, we know $H_b^1\big(G^{(i)},\F_t^{(i)}\big)=0$.
    Hence, (2) holds.
    Using the fact that $H_b^2\big(G^{(i)},\F_t^{(i)}\big)=\C\left<\Omega^{(i)}_{t,1},\ldots,\Omega^{(i)}_{t,r_i}\right>$,we know that
    $$\alpha = \sum_{j,k=1}^{r_i}a_{j,k}\,\mu^{(i)}_{t,j}\otimes \Omega^{(i)}_{t,k},
    \quad \text{and }\quad d\alpha = \sum_{j,k=1}^{r_i}a_{j,k} \,\Omega^{(i)}_{t,j}\otimes \Omega^{(i)}_{t,k}, \quad \text{for }a_{j,k}\in\C.$$
    Therefore, (2) implies the existence of a symmetric quadratic form $Q_t^{(i)}$ such that $Q_t^{(i)}(\Omega^{(i)}_{t,k},\Omega^{(i)}_{t,j})$ vanishes in $H^4_b\big(G^{(i)},\F_t^{(i)}\big)$, or in other words, it is $d$-exact from a basic form.
    Now suppose that two such polynomials exist, $Q_t^{(i)},\widetilde{Q}_t^{(i)}$. 
    We can find two forms $\sigma,\widetilde{\sigma}\in H_b^2\big(G^{(i)},\F_t^{(i)}\big)\otimes\W_t^{(i)}$ such that $d\sigma=Q^{(i)}_t(\Omega^{(i)}_{t,k},\Omega^{(i)}_{t,j})$ and $d\widetilde\sigma=\widetilde Q^{(i)}_t(\Omega^{(i)}_{t,k},\Omega^{(i)}_{t,j})$, that implies $d'\sigma=d'\widetilde{\sigma}=0$.
    Since $b_3=1$ and $H_b^1\big(G^{(i)},\F_t^{(i)}\big)=0$, there exists $\gamma\in\W_t^{(i)}\bigwedge\W_t^{(i)}$ such that $d'\gamma=c_1\sigma+c_2\widetilde\sigma$ for some constants $c_1,c_2$ not simultaneously vanishing. 
    But this implies that $c_1Q^{(i)}_t(\Omega^{(i)}_{t,k},\Omega^{(i)}_{t,j})+c_2\widetilde Q^{(i)}_t(\Omega^{(i)}_{t,k},\Omega^{(i)}_{t,j}) = c_1d\sigma+c_2d\widetilde\sigma=dd'\gamma=dd\gamma=0$. That is $Q^{(i)}_t$ and $\widetilde Q^{(i)}_t$ are multiple.
    In other words, any such a quadratic form gives a representative in $H^3(G^{(i)})$, so there is one and only one of them (up to multiplicative constants).
    To sum up, any quadratic form in the $\Omega^{(i)}_{t,j}$ but one gives a representative in $H^4_b\big(G^{(i)},\F_t^{(i)}\big)$.
    Furthermore, there are no other representatives in $H^4_b\big(G^{(i)},\F_t^{(i)}\big)$ besides these, indeed they could not be in the image of $d'$, hence giving a representative in $H^4(G^{(i)})$ which we know is trivial.
    In formulas, we have
    $$H^4_b\big(G^{(i)},\F_t^{(i)}\big) = \R\left< \Omega^{(i)}_{t,k} \wedge \Omega^{(i)}_{t,j} \right>_{k,j=1,\ldots,r_i} / Q^{(i)}_t(\Omega^{(i)}_{t,k},\Omega^{(i)}_{t,j}).$$
    Considering all the simple components we get
    $$H^4_b(G,\F_t) = \R\left< \Omega_{t,k} \wedge \Omega_{t,j} \right>_{k,j=1,\ldots,r} / \left< Q^{(i)}_t(\Omega^{(i)}_{t,k},\Omega^{(i)}_{t,j}) \right>_{i=1,\ldots,s} .$$
    In particular, its dimension is $\sum_{i=1}^s (r_i(r_i+1)/2 - 1) $, independent on $t$. 
    
    \medskip
    
    {\bf Step 5:} {\em Proving the basic harmonic 4-forms have a smooth bundle structure.}\\
    Consider the elliptic self-adjoint operators $D_t:\A_\H^4(G,\F_t)\rightarrow\A_\H^4(G,\F_t)$ as defined in \eqref{eq: new Laplacian}, and notice that both the space of orizontal forms $\A_H^4(G,\F_t)$ and the operator $D_t$ vary smoothly on $t$.
    By standard theory (see \cite{MR115189}) there is a $\varepsilon>0$ and a neighborhood $U$ of $0\in\R^\ell$ such that for $t\in U$
    $$\Upsilon_t \defeq \mathrm{span}\left\{ \xi\in\A_\H^4(G,\F_t) \;|\; D_t\xi = \lambda\xi, \; |\lambda|<\varepsilon \right\}$$
    forms a smooth bundle over $U$ (see also \cite[Theorem 3.4]{MR4278212} for more details).
    Now consider the operators $\L_{V_{t,i}}:\Upsilon_t\rightarrow \A_T^4(G,\F_t)$ for $i=1,\ldots,r$.
    Up to replacing $U$ with a smaller neighborhood we have that for any $t\in U$
    $$\dim \left(\bigcap_{i=1}^r\ker \L_{V_{0,i}} \right)\geq\dim \left(\bigcap_{i=1}^r\ker \L_{V_{t,i}} \right).$$
    As explained in Section \ref{subs: new Laplacian}, we can recover the harmonic basic forms as $\mathbf{H}_b^4(G,\F_0) = \bigcap_{i=1}^r\ker \L_{V_{0,i}}$, while evidently $\mathbf{H}_b^4(M,\F_t) \subset \bigcap_{i=1}^r\ker \L_{V_{t,i}}$.
    Therefore, since $\dim(\mathbf{H}_b^4(M,\F_t))$ does not depend on $t$, we have
    $$\dim\left(\mathbf{H}_b^4(M,\F_0)\right)=\dim\left(\mathbf{H}_b^4(M,\F_t)\right)\leq \dim \left(\bigcap_{i=1}^r\ker \L_{V_{t,i}} \right) \leq \dim \left(\bigcap_{i=1}^r\ker \L_{V_{0,i}} \right) = \dim\left(\mathbf{H}_b^4(M,\F_0)\right) . $$
    Thus, also $\dim \left(\bigcap_{i=1}^r\ker \L_{V_{t,i}} \right)$ does not depend on $t$.
    This means that these operators cut a smooth bundle from $\A_\H^4(G,\F_t)$, which is precisely that of the harmonic forms $\mathbf{H}_b^4(M,\F_t)$.
    \medskip

    {\bf Step 6:} {\em Defining the complex structure from the quadratic form.}\\
    The fact that the space of harmonic basic $4$-forms varies smoothly with $t$ implies that also the quadratic forms $Q_t^{(i)}$ depend smoothly on $t$.
    As a matter of fact, the projections on $\mathbf{H}_b^4(M,\F_t)$ are smooth in $t$, hence the harmonic components of the forms $\Omega_{t,j}^{(i)}\wedge\Omega_{t,k}^{(i)}$ vary smoothly on $t$, and $Q_t^{(i)}$ represents the non-trivial linear combination of these that vanishes.

    We can rescale the vectors $V_{0,1},\ldots,V_{0,r}$ to get a $g$-orthonormal basis (notice that they are already orthogonal).
    Then $J_{|_\t}$ is an orthogonal matrix in that basis.
    We know from \eqref{eq: vanishing Q-form} that $Q_0^{(i)}=\id$.
    Thus $Q_t^{(i)}$ is a non-degenerate quadratic form for $t$ small enough.
    Define $Q_t = \sum_{i=1}^s\lambda_{t,i}Q_t^{(i)}$ with $\lambda_{t,1},\ldots,\lambda_{t,s}\in\R$ smooth in $t$ and such that $\lambda_{0,1}=\cdots=\lambda_{0,s}=1$.
    This is a non-degenerate quadratic form for small $t$, hence there exists a linear complex structure $J_t$ varying smoothly on $t$ such that $Q_t=J_tQ_tJ_t^T$ and $J_0=J_{|_\t}$.
    We use this matrix to define a complex structure on $\t$. 
    Then, as explained in Step 1, we get a complex structure on $G$, which we still indicate with $J_t$, that vary smoothly on $t$ and such that $J_0=J$.
    Notice that in this step we have enlarged the deformation space with the degrees of freedom given by the coefficients $\lambda_{t,i}$ and the choice of a $Q_t$-orthogonal linear complex structure.
    \medskip

    {\bf Step 7:} {\em Proving the $(1,1)$-Aeppli number is constant.}\\
    We use the complex structure just defined to decompose $\W_t\defeq\W_t^{(1)}+\cdots+\W_t^{(s)}\subset\A^1(G)$ in $(1,0)$-part $\W_t^{1,0}$ and $(0,1)$-part $\W_t^{0,1}$.
    Using again \cite[Theorem 4.13]{MR3918619}, the foliation structure induces a quasi isomorphism of double graded algebras given by
    \begin{equation}\tag{Aeppli-model}\label{a-model}
        \Psi_{A} : \left( H^{\bullet,\bullet}_b(G,\F_t) \otimes \bigwedge(\W_t^{1,0}\oplus\W_t^{0,1}), \p',\bar\p'\right) \rightarrow \left(\A^{\bullet,\bullet}(G,J_t),\p,\bar\p\right) \,;
    \end{equation}
    in particular $\Psi_A$ induces an isomorphisms in the Aeppli cohomology. 
    Notice that $\p'\equiv 0$ on $H^{\bullet,\bullet}_b(G,\F_t)$ and $\p' \equiv \p$ on $\W_t^{1,0}\oplus\W_t^{0,1}$ since $\p(\W_t^{1,0}\oplus\W_t^{0,1})$ is naturally embedded in $H^{1,1}_b(G,\F_t)$; similarly for $\bar\p'$.
    Now take an element 
    $$ \eta_A = \sum_{j,k=1}^rA_{j,k}(\mu_{t,j}-\im J_t\mu_{t,j}) \wedge (\mu_{t,k}+\im J_t\mu_{t,k}) \in \W_t^{1,0}\wedge\W_t^{0,1}$$
    defined by a $\C$-valued $(r\times r)$-matrix $A$. It is not exact in the Aeppli cohomology sense, and moreover it holds
    \begin{align*}
        \p'\bar\p' \eta_A
            &= \sum_{j,k,p,q=1}^rA_{j,k} \left( (\bar\p\mu_{t,j}-\im (J_t)^j_p\bar\p\mu_{t,p}) \wedge (\p\mu_{t,k}+\im (J_t)^k_q\p\mu_{t,q}) \right) \\
            &= \sum_{j,k,p,q=1}^rA_{j,k} \left( (d\mu_{t,j}-\im (J_t)^j_p d\mu_{t,p}) \wedge (d\mu_{t,k}+\im (J_t)^k_q d\mu_{t,q}) \right) \\
            &= \sum_{j,k,p,q=1}^rA_{j,k} \left( \Omega_{t,j}\wedge\Omega_{t,k} -\im (J_t)^j_p\Omega_{t,p}\wedge\Omega_{t,k} +\im (J_t)^k_q\Omega_{t,j}\wedge\Omega_{t,q} + (J_t)^j_p(J_t)^k_q \Omega_{t,p}\wedge\Omega_{t,q}\right) \\
            &= \sum_{j,k,p,q=1}^r \left( A_{j,k} -\im (J_t)_j^pA_{p,k} +\im (J_t)_k^qA_{j,q} + (J_t)_j^p(J_t)_k^qA_{p,q} \right) \Omega_{t,j}\wedge\Omega_{t,k} \\
            &= \sum_{j,k=1}^r \left( A -\im J_tA +\im AJ_t^T + J_tAJ_t^T \right)_{j,k}\, \Omega_{t,j}\wedge\Omega_{t,k}
    \end{align*}
    where we used twice that $\p(\mu_{t,j}-\im J_t\mu_{t,j}) = \p(\mu_{t,j})^{1,0}=(\Omega_{t,j})^{2,0}=0$ and similarly $\bar\p(\mu_{t,j}+\im J_t\mu_{t,j})=0$.
    Taking $A=\tfrac{1}{2}Q_t$ we obtain $\p'\bar\p'\eta_A = Q_t(\Omega_{t,j},\Omega_{t,k})$ which is zero in the basic cohomology $H_b^{2,2}(G,\F_t) \cong H_b^4(G,\F_t,\C)$ by the transversal $\p\bar\p$-Lemma. 
    Therefore, these give representatives in the Aeppli cohomology of $(G,J_t)$ which then satisfies $h^{1,1}_A(G,J_t)\geq 1$.
    Notice that we know that $h^{1,1}_A(G,J) = 1$, hence, by upper-semi-continuity, $h^{1,1}_A(G,J_t) = 1$ for small $t$.
\end{proof}

We highlight that, since the holomorphic structure induced on the flag manifold $G/T$ is rigid, the only non-trivial infinitesimal deformations of the transverse holomorphic structure of $G$ are of the form considered in Theorem \ref{th: main}.
As a matter of fact, it was proved in \cite{MR594704, MR614365} that for $(\F,J)$ a holomorphic foliation on $G$, the infinitesimal deformations of $(\F,J)$ correspond to elements of $H^1(G,\Theta_\F)$, where $\Theta_\F$ is the sheaf of germs of holomorphic vector fields on the normal bundle of $\F$ which are constant on the leaves of $\F$.
We can use Leray's spectral sequence for the Tits fibration \eqref{eq: Tits} to compute this cohomology, adapting the classical argument used for Sasakian structures (see e.g. \cite{MR2382957}).
To do so, we have to consider the Leray sheaf $R^q\pi_*\Theta_\F$ associated to $\pi$, which for any integer $q$ is the sheafification of the presheaf 
$$G/T \,\stackrel{open}{\supset}\, U\mapsto H^q\left(\pi^{-1}(U),\left(\Theta_\F\right)_{|_{\pi^{-1}(U)}}\right).$$
Notice that the stalk in $x\in G/T$ of this presheaf satisfies
$$H^q\left(\pi^{-1}(x),\left(\Theta_\F\right)_{|_{\pi^{-1}(x)}}\right) = H^q\left(T_x,\left(\Theta_\F\right)_{|_{T_x}}\right) = H^q\left(T_x,\left(\Theta_{G/T}\right)_{x}\right) = 
\begin{cases}
    \left(\Theta_{G/T}\right)_{x}^{\binom{r}{q}} & \text{if } 0\leq q\leq r;\\
    0 & \text{otherwise,}
\end{cases}$$
where $T_x$ is the $T$-orbit of $x$, $\Theta_{G/T}$ is the sheaf of germs of holomorphic vector fields on $G/T$, and we used the fact that the vector fields in $\Theta_\F$ are constant along the leaves of $\F$.
Then the Leray sheaf is
$$R^q\pi_*\Theta_\F = \begin{cases}
    \left(\Theta_{G/T}\right)^{\binom{r}{q}} & \text{if } 0\leq q\leq r;\\
    0 & \text{otherwise.}
\end{cases} $$
By Leray's theorem the spectral sequence given by $E_2^{p,q}= H^p(G/T,R^q\pi_*\Theta_\F)$ converges to $H^\bullet(G,\Theta_\F)$.
Furthermore, a theorem of Bott ensures that $H^p(G/T,\Theta_{G/T})=0$ for $p>0$ \cite{MR89473}, that is precisely saying that the holomorphic structure on the flag is rigid.
Therefore, the spectral sequence degenerates at the second page since it has only one non-trivial row.
The convergence implies the existence of an exact sequence
$$0\rightarrow E_\infty^{1,0}\rightarrow H^1(G,\Theta_\F) \rightarrow E_\infty^{0,1}\rightarrow 0,$$
and thanks to the above argument $E_\infty^{1,0} \cong E_2^{1,0}=0$ and $H^1(G,\Theta_\F)\cong E_2^{0,1} = H^0\left(G/T, \left(\Theta_{G/T}\right)^r\right)$.
This is precisely saying that the non-trivial infinitesimal deformations of the transversely holomorphic structure on $G$ are given by a choice of $r$ holomorphic vector fields on $G/T$. In other words, they come from deformations of the vector fields generating $\t=\Lie{T}$ which arise from the infinitesimal complex automorphisms of $G/T$.

\bibliographystyle{alpha}
\bibliography{Bibliography}

\end{document}